\documentclass[12pt,a4paper]{article}
\usepackage{amsmath,amssymb,amsthm}
\usepackage[utf8]{inputenc}
\usepackage[T1, T2A]{fontenc}
\usepackage[russian, english]{babel}
\newtheorem{theorem}{Theorem}
\newtheorem{lemma}{Lemma}
\newtheorem{remark}{Remark}
\usepackage{hyperref}

\author{Nikita V. Gaianov, Anastasia V. Parusnikova}
\title{On finding formal power-logarithmic expansions of solutions to $q$-difference equations}%\thanks{This study was carried out within The National Research University Higher School of Economics Academic Fund Program in 2013-2014, research grant No. 12-01-0030.}}

\date{}

\DeclareMathOperator{\val}{val}

\title{On convergence of formal Dulac series, satisfying an algebraic $q$-difference equation}
\author{Nikita V. Gaianov, Anastasia V. Parusnikova}
\date{\today}

\begin{document}
\maketitle
\begin{abstract}
    An algebraic $q$-difference equation for $|q|>1$ is considered. A sufficient condition is proposed for the convergence of a solution to such an equation in the form of a Dulac series within a sector centered at the origin. An example illustrating the application of this sufficient condition is provided.
%\end{abstract}

\textbf{Keywords:} $q$-difference equation, Dulac series, po\-wer-lo\-ga\-rith\-mic expansion, convergence.

\textbf{ MSC classes:} 	34m25, 34m55.

\end{abstract}

\section{Introduction}
Consider an algebraic  $q$-difference equation
\begin{equation}
    F(x, y, \sigma y, \ldots, \sigma^n y) = 0, \label{eq:QDE}
\end{equation}
where $F = F(x, y_0, \ldots, y_n)$ -- is a polynomial of $n+2$ variables with complex coefficients, $x$  -- is an independent,  $y$ -- is a dependent variable. Both variables are copmplex-valued. Operator  $\sigma$ acts like $(\sigma y)(x)=y(qx)$, where $q \in \mathbb{C}$ и $|q|>1$.

For such equations, methods  of constructing formal solutions in the form of power and generalized power series are known ~\cite{canopoly}, and sufficient conditions for the convergence of the corresponding series in some disk or sector with vertex at the origin are obtained  \cite{gontsov}, \cite{zhang_conv}, \cite{zhang_ex}.

In \cite{us}, we determined for which types of equations a formal solution in the form of a power-logarithmic series (Dulac series) exists. In \cite{Ufa2026}, we transferred the methods and results of Power Geometry~\cite{brumn} to the case of $q$-difference equations, formulated sufficient conditions for the existence of formal solutions to an algebraic $q$-difference equation in the form of power-logarithmic series of a more general form than in the previous work~\cite{us}, and presented a method for obtaining them.

We define Dulac series as a formal series of a form
\begin{equation}
y = \sum_{k\geq 1} p_k(t)x^k ,\label{eq:Dulac}
\end{equation}
%  \in \mathbb{C}[t][[x]]
where $p_k \in \mathbb{C}[t]$,  $t = \log_q x = \frac{\ln x}{\ln q}$, $\arg x \in (-\pi, \pi]$, we consider a fixed main branch of a function $\ln x$.
We define operator $\sigma$ on the linear space of Dulac series as:
$$\sigma \left(\sum_{k\geq1} p_k(t)x^k \right) = \sum_{k\geq1} q^k p_k(t+1) x^k.$$

A Dulac series \eqref{eq:Dulac} is called a formal solution to the equation \eqref{eq:Dulac}, if the result of its substitution to the LHS of the equation \eqref{eq:QDE} is a Dulac series with zero coefficients.

A sufficient condition of convergence in the sector with vertex in zero and angle less than $2\pi$ of formal series of the form \eqref{eq:Dulac} satisfying equation \eqref{eq:QDE} is obtained in this work.
The proof is based on the implicit map theorem for Banach spaces \cite{KolmFomin} and methods from the works  \cite{Malgrange, GonGor19}.

\begin{theorem}[The main theorem]
    Let the equation \eqref{eq:QDE} have a formal solution $y = \varphi$, where
    \begin{equation}
        \varphi = \sum_{k \geq 1} p_k (t) x^k, \label{eq:solution}
    \end{equation}
    and
    $$\frac{\partial F}{\partial y_j}(x, \varphi, \sigma \varphi, \ldots, \sigma^n \varphi) = a_j x^m + b_j(t) x^{m+1} + \ldots, \quad j=0, 1, \ldots, n, $$
    where $a_n \neq 0$, and a number $m \in \mathbb{N}\cup\{0\}$ is the same for all the $j = 0, 1, \ldots, n$. Then there exists $R>0$ such that for any sector $S=\{x: |x|<r, \arg x \in (\varphi_1, \varphi_2)\}$, $r<R$, $\varphi_2-\varphi_1<2\pi$ the series \eqref{eq:solution} is  uniformly convergent on $S$.
    \label{th:main}
\end{theorem}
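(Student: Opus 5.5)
We follow the Malgrange-type scheme for power series, adapted to the sector $S$ and to the logarithmic dependence as in \cite{GonGor19}. The idea is to show that the formal solution is the asymptotic expansion of a genuine analytic solution. That solution will come from a fixed-point (implicit map) argument in a Banach space of functions holomorphic in a sector and vanishing to high order at $0$.

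\textbf{Step 1: reduction.} Fix the sector $S$ with opening less than $2\pi$. On the sector, $t=\log_q x$ is holomorphic and $|p_k(t)x^k|\le C_k|x|^{k-\varepsilon}$ for any $\varepsilon>0$. We will also need $\sigma$ to map functions on $S$ to functions on $S$. Since $|q|>1$, this holds once we restrict to $|x|<r/|q|^n$ and treat $\arg q$ properly. If $\arg q\neq 0$ we enlarge the sector by $n\arg q$, or first pass to the variable $\xi=\ln x$, where $\sigma$ becomes a shift.

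Truncate: put $\varphi_N=\sum_{k\le N}p_k(t)x^k$ and seek $y=\varphi_N+x^{N}u$, with $N\gg m$. Substituting and expanding by Taylor's formula gives
\[
F(x,y,\ldots,\sigma^n y)=F(x,\varphi_N,\ldots)+\sum_j \frac{\partial F}{\partial y_j}(x,\varphi_N,\ldots)\,q^{jN}x^N\sigma^j u+x^{2N}Q(x,t,u,\ldots,\sigma^n u).
\]
The first term is $O(x^{N+1+m}\,\mathrm{poly}(t))$ because $\varphi$ is a formal solution. By hypothesis the coefficients of the linear term are $a_jx^m+O(x^{m+1}t^d)$. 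Dividing by $x^{N+m}$ gives an equation
\[
L u:=\sum_j a_jq^{jN}\sigma^j u = x\,\Phi(x,t,u,\ldots,\sigma^n u),
\]
where $\Phi$ is polynomial in the $u$-variables with coefficients analytic in $x$ and polynomial in $t$.

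\textbf{Step 2: inverting the linear part.} The operator $L$ has constant coefficients, with characteristic polynomial $P(\lambda)=\sum_j a_jq^{jN}\lambda^j$ acting on the eigenfunctions $x^\mu$ with $\lambda=q^\mu$. Since $a_n\neq0$ and $N$ is large, the roots of $P$ satisfy $|\lambda|\sim|q|^{-N}\cdot$const up to bounded factors. Hence $L=a_nq^{nN}(\sigma-\lambda_1)\cdots(\sigma-\lambda_n)$ with $|\lambda_i|<1$. Each factor $\sigma-\lambda$ is invertible on the Banach space $B_\alpha$ of functions holomorphic on $S$ with norm $\sup|x|^{-\alpha}|u|$, for small $\alpha>0$. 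Indeed,
\[
(\sigma-\lambda)^{-1}=-\sum_{k\ge0}\lambda^{-k-1}\sigma^k \quad\text{or}\quad \sigma^{-1}\sum_k\lambda^k\sigma^{-k},
\]
and the Neumann series converges whenever $|\lambda q^{-\alpha}|<1$ or the reverse inequality holds, since $\sigma^{-1}$ contracts by $|q|^{-\alpha}$. The correct choice is the one that uses only values at points $x q^{-k}$, which remain in the sector. So we take $L^{-1}$ built from $\sigma^{-1}$; this needs $|\lambda_i^{-1}||q|^{-\alpha}$ small, and it is exactly here that the hypothesis that $m$ is the same for all $j$ and that $a_n\neq0$ enters. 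The resulting bound, uniform on $B_\alpha$ restricted to $|x|<r$, is the main obstacle, together with making $\sigma^{-1}$ versus $\sigma$ compatible with the domain. Taking $N$ large to separate $\alpha$ from the exponents $\log_q\lambda_i$ should handle it.

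\textbf{Step 3: fixed point.} The equation $u=L^{-1}(x\Phi(\ldots))$ is a contraction on a small ball of $B_\alpha$ for $r$ small, since the factor $x$ and polynomial-in-$t$ growth $|t|^d\le C|x|^{-\varepsilon}$ give a small Lipschitz constant. Equivalently, one applies the implicit map theorem \cite{KolmFomin} to $(r,u)\mapsto u-L^{-1}x\Phi$. This yields an actual solution $y$ holomorphic on $S$.

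\textbf{Step 4: conclusion.} Repeating the construction for every $N'>N$ and using uniqueness of the fixed point in $B_\alpha$ shows that $y-\varphi_{N'}=O(x^{N'-\varepsilon})$. The estimates are uniform, and the Cauchy-type bounds on $p_k$ that come from the contraction give geometric control $|p_k(t)x^k|\le C\rho^k$ with $\rho<1$ on $|x|<r<R$. This gives uniform convergence of \eqref{eq:solution} on $S$ to $y$.
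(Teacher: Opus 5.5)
\textbf{Gap: Step 4 does not prove convergence.} Your Steps 1--3, even if completed, give only a genuine solution $y$, holomorphic on a sector, with $y\sim\varphi$ as $x\to 0$. Step 4 then asserts that ``Cauchy-type bounds on $p_k$ that come from the contraction'' give $|p_k(t)x^k|\le C\rho^k$. That assertion is the whole content of the theorem, and nothing you have set up supports it.

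\begin{itemize}
\item \emph{An asymptotic expansion on a sector need not converge.} For example, Euler's equation $x^2y'+y=x$ has sectorial solutions asymptotic to the divergent series $\sum_{n\ge0}(-1)^n n!\,x^{n+1}$. So a separate mechanism is needed to control the coefficients.
\item \emph{Cauchy estimates are unavailable.} Your domain is a sector of opening less than $2\pi$, not a disc, and $t=\log_q x$ is tied to $x$. The $p_k$ therefore cannot be extracted by integrals over circles.
\item \emph{The contraction constants cannot supply the bounds.} At level $N'$, the data of your fixed-point problem depend on $p_1,\ldots,p_{N'}$. These data are $F(x,\varphi_{N'},\ldots)/x^{N'+m}$, the coefficients of $\Phi$, and the admissible radius. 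Bounding the $p_k$ through them is circular, and the radius may shrink to $0$ as $N'\to\infty$.
\end{itemize}

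\textbf{What is missing, and how the paper supplies it.} The fixed-point argument has to be run on the coefficients themselves, with $t$ treated as an independent variable.
\begin{itemize}
\item After the same reduction as your Step 1, the paper chooses $\ell$ so that $\sum_{j<n}|a_j/a_n|\,|q|^{(j-n)\ell}<1$. This is exactly where $a_n\neq0$ and the common $m$ enter.
\item It proves $\deg P_k\le Ck$ and rescales $t$ to $\tilde t=\varepsilon t$ with $(1+\varepsilon)^C<|q|$.
\item It works in Banach spaces $H_j$ of \emph{formal} Dulac series, with norm $\sum_k\|T_\varepsilon^jP_k\|\,|q|^{jk}$. Here $\|\cdot\|$ is the sum of the moduli of the coefficients.
\item In these spaces $L(\sigma):H_n\to H_0$ is an isomorphism. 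The implicit function theorem applied to $A(\lambda,\psi)=L(\sigma)\psi+\lambda x\tilde M(\lambda x,\tilde t,\psi,\ldots,\sigma^n\psi)$ then gives a solution in $H_n$ for small $\lambda$.
\item By uniqueness of the formal solution, this solution equals $\sum_k\tilde P_k\lambda^kx^k$. Hence $\sum_k\lambda^k\|\tilde P_k\|<\infty$, which is the coefficient control you lack.
\item The Weierstrass M-test on the sector, using $|\varepsilon\log_q x|<|x|^{-1/\rho}$, then gives uniform convergence.
\end{itemize}

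\textbf{A second, independent problem in Step 2.} For non-real $q$, your Neumann series uses every power $\sigma^{-k}$. These rotate the sector by $-k\arg q$, so no sector of opening $<2\pi$ on the principal branch is invariant, and enlarging it by $n\arg q$ does not fix this. The paper's route needs no analytic solution and no invariant domain, so this difficulty never arises there.
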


The results of this work were presented on June 6, 2026, at the conference "XVII Scientific Conference 'Differential Equations and Related Problems of Mathematics'{}", in Kolomna, Russia, and on July 5, 2026, at the conference "International Conference on Differential Equations and Dynamical Systems"{} (DIFF2026), in Suzdal, Russia \cite{Suz2026}.

\section{Reduction of the equation to a special form}
\begin{theorem}
    Under the assumptions of the theorem~\ref{th:main}   there exists $\ell' \in \mathbb{N}$, such that for any $\ell \geq \ell'$
    the substitution $$ y = \sum_{k=1}^\ell p_k(t)x^k + x^\ell u$$ transforms initial equation  \eqref{eq:QDE} to the form
    \begin{equation}
        L(\sigma)u + x\, M(x, t, u, \sigma u, \ldots, \sigma^n u) = 0,\label{eq:specialform}
    \end{equation}
    where
    \begin{enumerate}
        \item $L(\sigma) = \sum_{j = 0}^n a_j q^{j \ell} \sigma^j $ -- is a polynomial of operator $\sigma$,
        \item $\sum_{j=0}^{n-1} \left|\frac{a_j}{a_n} q^{(j - n) \ell} \right|<1,$
        \item $M = M(x, t, y_0, \ldots, y_n)$ is a polynomial of $n+3$ variables.
    \end{enumerate}
\end{theorem}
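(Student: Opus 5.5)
We substitute $y=\varphi_\ell+x^\ell u$ with $\varphi_\ell=\sum_{k=1}^\ell p_k(t)x^k$, and expand $F$ by Taylor's formula around the point $Y_\ell=(x,\varphi_\ell,\sigma\varphi_\ell,\ldots,\sigma^n\varphi_\ell)$. Since $\sigma(x^\ell u)=q^\ell x^\ell\,\sigma u$, we have $\sigma^j(x^\ell u)=q^{j\ell}x^\ell\sigma^j u$. Because $F$ is a polynomial, the expansion is finite:
\[
F(x,y,\ldots,\sigma^n y)=F(Y_\ell)+x^\ell\sum_{j=0}^n \frac{\partial F}{\partial y_j}(Y_\ell)\,q^{j\ell}\sigma^j u+x^{2\ell}\,Q(x,t,u,\ldots,\sigma^n u),
\]
where $Q$ is a polynomial in all its arguments. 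Here $\sigma^j\varphi_\ell$ is a polynomial in $x$ and $t$, because $t\mapsto t+j$ keeps the $p_k$ polynomial. The plan is to divide the whole equation by $x^{\ell+m}$ and show that every term except $L(\sigma)u$ carries at least one extra factor of $x$. The result must remain polynomial in $x$ and $t$ after this division.

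\textbf{Step 1 (the free term).} We need $F(Y_\ell)=O(x^{\ell+m+1})$ as a polynomial in $x,t$ divisible by $x^{\ell+m+1}$. Write $\varphi=\varphi_\ell+r_\ell$ with $r_\ell=O(x^{\ell+1})$, and expand $0=F(x,\varphi,\ldots)$ around $Y_\ell$. This gives
\[
F(Y_\ell)=-\sum_j\frac{\partial F}{\partial y_j}(Y_\ell)\sigma^j r_\ell+O(x^{2\ell+2}).
\]
The derivatives $\partial F/\partial y_j(Y_\ell)$ agree with $\partial F/\partial y_j(x,\varphi,\ldots)$ modulo $O(x^{\ell+1})$. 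For $\ell\ge m$ they are therefore $O(x^m)$, so the sum is $O(x^{\ell+1+m})$. Since $F(Y_\ell)$ is a genuine polynomial in $x,t$, its vanishing to order $\ell+m+1$ in $x$ means it is divisible by $x^{\ell+m+1}$. Similarly, $x^{2\ell}Q$ is divisible by $x^{\ell+m+1}$ once $\ell\ge m+1$.

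\textbf{Step 2 (the linear term).} For $\ell\ge m+1$ we have $\frac{\partial F}{\partial y_j}(Y_\ell)=a_jx^m+x^{m+1}(\text{polynomial in }x,t)$. This uses the hypothesis of Theorem~\ref{th:main} together with the truncation remark above: the difference between the derivatives at $Y_\ell$ and at $\varphi$ is $O(x^{\ell+1})$, and the lower coefficients are polynomials in $t$. Dividing by $x^{\ell+m}$ leaves
\[
\sum_j a_jq^{j\ell}\sigma^j u+x\,M(x,t,u,\ldots,\sigma^n u)=0,
\]
where $M$ collects the quotient of the free term, the higher-order parts of the linear coefficients, and $x^{\ell-m-1}Q$. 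This $M$ is a polynomial in $n+3$ variables, which gives items 1 and 3 of the statement.

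\textbf{Step 3 (choice of $\ell'$).} Item 2 holds for large $\ell$ because $|q|>1$. Indeed,
\[
\sum_{j<n}\left|\frac{a_j}{a_n}\right||q|^{(j-n)\ell}\le |q|^{-\ell}\sum_{j<n}\left|\frac{a_j}{a_n}\right|,
\]
which tends to $0$ as $\ell\to\infty$. We take $\ell'\ge m+1$ large enough that this bound is $<1$, which is possible since $a_n\neq0$.

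The main obstacle is Step 1. There we must make sure that divisibility by $x^{\ell+m+1}$ holds in the polynomial ring $\mathbb{C}[x,t]$, so that $M$ is polynomial rather than a formal series. This follows because $F(Y_\ell)$ is a polynomial and its formal Dulac expansion coincides with itself, so order of vanishing in $x$ means divisibility. We must also track that the $t$-shifts under $\sigma$ keep everything polynomial.
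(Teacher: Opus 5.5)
Your proposal is correct and follows essentially the same route as the paper. You Taylor-expand around the truncation $\Phi_\ell$, compare $\partial F/\partial y_j(x,\Phi_\ell)$ with $\partial F/\partial y_j(x,\Phi)$ to get the leading term $a_jx^m$ once $\ell>m$, expand $0=F(x,\Phi)$ to get $\val F(x,\Phi_\ell)\ge \ell+m+1$, divide by $x^{\ell+m}$, and choose $\ell$ using $|q|^{-\ell}\to 0$. Your explicit observation that these quantities lie in $\mathbb{C}[x,t]$, so that vanishing to order $\ell+m+1$ in $x$ gives genuine divisibility and hence a polynomial $M$, fills in a point the paper leaves implicit.
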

\begin{proof}
    We define for the series $\varphi = \sum_{k \geq 1} p_k (t) x^k \in \mathbb{C}[t][[x]]$ \emph{an order} $$\val \varphi := \begin{cases}
            +\infty,                        & \varphi = 0;      \\
            \min \{k \mid p_k \neq 0\}, & \varphi \not = 0.
        \end{cases}$$
    For any natural $\ell$ the series $\varphi$ can be presented in a form
    $$\varphi
        =
        \sum_{k=1}^\ell p_k(t) x^k
        +
        x^\ell \sum_{k = 1}^\infty p_{k+\ell}(t)x^k
        =: \varphi_\ell + x^\ell \psi.
    $$

    Let $$ \Phi := (\varphi, \sigma\varphi, \ldots, \sigma^n\varphi), \qquad \Phi_\ell := (\varphi_\ell, \sigma\varphi_\ell, \ldots, \sigma^n\varphi_\ell). $$
    $$ \Phi = \Phi_\ell + x^\ell \Psi, $$ where $$ \Psi := (\psi_0, \ldots, \psi_n), \qquad \psi_j := q^{j\ell}\sigma^j\psi, \qquad j = 0,\ldots,n, $$ т. е. $$ \Psi = (\psi, q^\ell\sigma\psi, q^{2\ell}\sigma^2\psi, \ldots, q^{n\ell}\sigma^n\psi). $$

    From Taylor's formula, we get
    \begin{multline}
        0
        =
        F(x, \Phi)
        =
        F(x, \Phi_\ell + x^\ell \Psi)
        =
        F(x, \Phi_\ell)
        +\\+
        x^\ell \sum_{j=0}^n \frac{\partial F}{\partial y_j}(x, \Phi_\ell)  \psi_j
        +
        \frac{x^{2\ell}}{2} \sum_{i,j=0}^n \frac{\partial^2 F}{\partial y_i \partial y_j}(x, \Phi_\ell)\psi_i \psi_j
        +
        x^{3\ell} R(x, t, \Psi),\label{eq:taylor}
    \end{multline}
    where $R$ -- is a polynomial of $x, t, \psi_0, \ldots, \psi_n$.
    % не содержащий констант, а также слагаемого, зависящего только от $x$, и слагаемых, содержащих только одну из функций $\psi_0, \ldots, \psi_n$, причём в первой степени.

    %
    We choose a natural $\ell$, for what the following conditions are fulfilled
    \begin{enumerate}
        \item $\ell > m$,
        \item
     $\sum_{j=0}^{n-1} \left| \frac{a_j}{a_n}  q^{(j-n) \ell} \right|<1.$
    \end{enumerate}

    Such $\ell$ exists as
    $$\sum_{j=0}^{n-1} \left| \frac{a_j}{a_n}  q^{(j-n) \ell} \right| \leq \sum_{j=0}^{n-1} \left| \frac{a_j}{a_n} \right| |q|^{-\ell}, $$
and $|q|^{-\ell} \to 0$ for $\ell \to +\infty$.

   By Taylor's formula, we obtain
    $$\frac{\partial F}{\partial y_j}(x, \Phi)
        -
        \frac{\partial F}{\partial y_j}(x, \Phi_\ell)
        =
        x^\ell \sum_{i=0}^n\frac{\partial^2 F}{\partial y_i \partial y_j} (x, \Phi_\ell)\psi_i
        +
        \ldots,$$
    moreover $\val \psi_i \geq 1, i=0, \ldots, n$.
    The order of the LHS is not less than $\ell+1 > m$, consequently, both summands on the LHS have the same initial term $a_j x^m$, so,
    \[\frac{\partial F}{\partial y_j} (x, \Phi_\ell)
        =
        a_j x^m + \ldots
    \]
    From the expansion~\eqref{eq:taylor} we obtain that
    $$\val F(x, \Phi_\ell) \geq m+\ell+1.$$
     Dividing the expansion \eqref{eq:taylor} by $x^{m+\ell}$ and isolating the terms linear in $\psi_0,$ $\psi_1, \ldots, \psi_n$, we obtain that the indicated substitution indeed reduces equation \eqref{eq:QDE} to the form \eqref{eq:specialform}.
\end{proof}

The following lemma holds \cite{us}:
\begin{lemma}\label{lemma:1}
    If for $\forall s\in \mathbb{N}\cup\{0\}\;L(q^s) \neq 0 $, then the equation \eqref{eq:specialform} has a unique solution in the form of the Dulac series.
\end{lemma}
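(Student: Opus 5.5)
The plan is to solve \eqref{eq:specialform} for the unknown $u=\sum_{k\ge1}c_k(t)x^k$, $c_k\in\mathbb{C}[t]$, coefficient by coefficient. We substitute the series, use the rule $\sigma(c_k(t)x^k)=q^k c_k(t+1)x^k$, and collect the coefficient of $x^k$. The linear part contributes
\[
\sum_{j=0}^n a_j q^{j\ell}q^{jk}c_k(t+j)=:(\mathcal{L}_k c_k)(t),
\]
where $\mathcal{L}_k$ is a linear operator on $\mathbb{C}[t]$. The term $x\,M(x,t,u,\sigma u,\dots,\sigma^n u)$ contributes at $x^k$ a polynomial in $t$ that depends only on $c_1,\dots,c_{k-1}$ and on the coefficients of $M$; the factor $x$ is what guarantees this. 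Hence the formal equation is equivalent to the triangular system
\[
\mathcal{L}_k c_k = f_k(t;c_1,\dots,c_{k-1}),\qquad k\ge1,
\]
with $f_k\in\mathbb{C}[t]$. For $k=1$ the right-hand side is the polynomial in $t$ coming from $M(0,t,0,\dots,0)$; if $M$ has terms of order zero in $x$ and $u$, these are absorbed into $f_1$ in the same way.

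The key step is to show that each $\mathcal{L}_k$ is a bijection of $\mathbb{C}[t]$ onto itself and preserves degree. Write $c_k(t+j)=\sum_{r\ge0}\frac{j^r}{r!}c_k^{(r)}(t)$, that is, $\sigma$ acts on the $t$-part as $e^{jD}$ with $D=d/dt$. Then
\[
\mathcal{L}_k=\sum_{j}a_jq^{j(\ell+k)}e^{jD}=L(q^k)\cdot\mathrm{Id}+N,
\]
where $L(\cdot)$ is read as the polynomial $\sum_j a_jq^{j\ell}z^j$ from the statement evaluated at $z=q^k$, and $N$ is a finite combination of powers $D^r$ with $r\ge1$. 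In particular $N$ strictly lowers degree and is therefore nilpotent on each space $\mathbb{C}[t]_{\le d}$.

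By the hypothesis $L(q^s)\ne0$ (used with $s=k\ge1$, and the case $s=0$ is harmless), $\mathcal{L}_k$ is a nonzero scalar plus a nilpotent operator on each finite-dimensional invariant subspace $\mathbb{C}[t]_{\le d}$. It is therefore invertible there, with the explicit inverse
\[
\mathcal{L}_k^{-1}=L(q^k)^{-1}\sum_{r\ge0}\bigl(-L(q^k)^{-1}N\bigr)^r,
\]
a finite sum on any given polynomial. Consequently $c_k=\mathcal{L}_k^{-1}f_k$ exists, is a polynomial, and is unique. Induction on $k$ then gives existence and uniqueness of the whole Dulac series $u$.

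The main obstacle is the bookkeeping in the first step: we must check that the $x^k$-coefficient of $xM(\dots)$ really involves only $c_1,\dots,c_{k-1}$, and that the shift $t\mapsto t+1$ inside $\sigma$ does not break the triangular structure. The first point holds because every monomial in $u,\sigma u,\dots$ multiplied by $x$ has $x$-order at least one more than the highest-index coefficient it contains. The second holds because $\sigma$ preserves $x$-order and maps $\mathbb{C}[t]x^k$ into itself.
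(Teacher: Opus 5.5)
Your argument is correct and is essentially the route the paper relies on. The paper only cites this lemma from the authors' earlier work, but elsewhere it uses exactly your triangular recursion: the equations $L(q^h\sigma)P_h=R_h$ in the proof of $\deg P_k\le kC$, and $L(q^kT_\varepsilon)P_k=B_k$ in the inversion of $L(\sigma)$, both solved uniquely in polynomials because $L(q^k)\neq 0$. Your splitting $\mathcal{L}_k=L(q^k)\,\mathrm{Id}+N$, with $N$ nilpotent on $\mathbb{C}[t]_{\le d}$, makes explicit the step the paper attributes to ``the theory of linear difference equations''.
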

\begin{remark}
    In the article \cite{us} the summation in the Dulac series started with zero degree of $x$.
\end{remark}

\begin{lemma} 
    The series $\psi$ is a unique Dulac series, satisfying the equation~\eqref{eq:specialform}.
\end{lemma}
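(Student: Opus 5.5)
The lemma has two parts. First, $\psi$ is a formal solution of \eqref{eq:specialform}. Second, it is the only Dulac series solving \eqref{eq:specialform}. The plan is to get the first from the construction in the reduction theorem and the second from Lemma~\ref{lemma:1}. For Lemma~\ref{lemma:1} we only need $L(q^s)\neq 0$ for every $s\in\mathbb{N}\cup\{0\}$.

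\emph{Existence.} Equation \eqref{eq:specialform} was obtained from \eqref{eq:taylor} by substituting $y=\varphi_\ell+x^\ell u$ and dividing by $x^{m+\ell}$. Every step of that computation is an identity in the ring of Dulac series $\mathbb{C}[t][[x]]$, and $\sigma$ acts on this ring as a ring endomorphism. Take $u=\psi$. Then $y=\varphi$, and $F(x,\Phi)=0$ formally. Dividing by $x^{m+\ell}$ preserves zero, since $\mathbb{C}[t][[x]]$ has no zero divisors and $x^{m+\ell}$ divides every term after the reduction. Hence $L(\sigma)\psi+xM(x,t,\psi,\sigma\psi,\ldots,\sigma^n\psi)=0$ formally.

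One point needs checking here. In the reduction the linear part is written through $\psi_j=q^{j\ell}\sigma^j\psi$. This is consistent with $\sigma^j(x^\ell u)=q^{j\ell}x^\ell\sigma^j u$, so it matches $L(\sigma)=\sum_j a_jq^{j\ell}\sigma^j$.

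\emph{Uniqueness.} We verify the hypothesis of Lemma~\ref{lemma:1}. For $s\ge 0$,
\[
L(q^s)=a_nq^{n\ell}q^{ns}\Bigl(1+\sum_{j=0}^{n-1}\frac{a_j}{a_n}q^{(j-n)\ell}q^{(j-n)s}\Bigr).
\]
Since $|q|>1$ and $j-n<0$, each factor satisfies $|q^{(j-n)s}|\le 1$. The sum is therefore bounded in modulus by $\sum_{j=0}^{n-1}\bigl|\frac{a_j}{a_n}q^{(j-n)\ell}\bigr|<1$, by condition~2 of the reduction theorem. The bracket is then nonzero, and since $a_n\neq0$ we get $L(q^s)\neq0$. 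Lemma~\ref{lemma:1} now gives uniqueness of the Dulac solution of \eqref{eq:specialform}, and by the existence part that solution is $\psi$.

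\emph{Main obstacle.} The step needing care is matching the conventions of Lemma~\ref{lemma:1}. By the Remark, in \cite{us} Dulac series start from $x^0$, while here $\psi$ starts from $x^1$. Uniqueness in the larger class of series starting from $x^0$ implies uniqueness in our smaller class, so this direction causes no trouble. It remains to confirm that the series $\psi$, which starts at $x^1$, is a legitimate element of the class used in \cite{us}, which it is.

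If one wants to avoid citing \cite{us}, a direct argument also works. Comparing coefficients of $x^k$ gives $q^{jk}$-twisted equations of the form $\sum_j a_jq^{j(\ell+k)}p(t+j)=$ (known polynomial). For a polynomial $p$, the leading-degree part of the left side gets multiplied by $L(q^k)\neq0$. Hence the operator is injective on $\mathbb{C}[t]$ and preserves degree, so each coefficient polynomial is determined uniquely by induction on $k$.
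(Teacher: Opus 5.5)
Your proof is correct and follows the paper's route: the paper also proves $L(q^s)\neq 0$ for all $s\ge 0$ by dividing by $a_nq^{n(\ell+s)}$ and bounding the remaining sum by $\sum_{j<n}\bigl|\frac{a_j}{a_n}q^{(j-n)\ell}\bigr|<1$, phrased as a contradiction rather than your direct factorization, and then invokes Lemma~\ref{lemma:1}. Your explicit check that $\psi$ actually solves \eqref{eq:specialform}, and your remark on the $x^0$ versus $x^1$ starting convention, fill in points the paper leaves implicit.
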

\begin{proof}
    Let us show that $L(q^s) \neq 0$ for all $s\in \mathbb{N}\cup\{0\}$.
    Assume the contrary: let for some $s$ is valid $L(q^s)=0$, i.e.
    $$\sum_{j=0}^n a_j q^{j (\ell+s)} = 0.$$ 
    We divide this equality by $a_n q^{n(\ell+s)}$ and let us move the last term in the sum to the RHS, we obtain
    $$\sum_{j=0}^{n-1} \frac{a_j q^{j(\ell+s)}}{a_n q^{n(\ell+s)}}= -1.$$
    But
    $$
    \left|
    \sum_{j=0}^{n-1} 
    \frac{a_j q^{j(\ell+s)}}{a_n q^{n(\ell+s)}}
    \right| 
    \leq 
    \sum_{j=0}^{n-1}
    \left|
    \frac{a_j}{a_n } q^{(\ell+s)(j-n)} 
    \right| 
    \leq 
    \sum_{j=0}^{n-1} 
    \left|
    \frac{a_j}{a_n}q^{\ell (j-n)} 
    \right| 
    < 
    1, $$
    which contradicts the previous equality. Consequently, $L(q^s)\neq0$, and, by Lemma \ref{lemma:1}, the series $\psi$ is the unique solution of the equation \eqref{eq:specialform}.
\end{proof}
Let us now define $P_{k} := p_{k+\ell}$.
Then for the equation \eqref{eq:specialform} we have a solution $\psi = \sum_{k\geq 1} P_k(t)x^k$.

\begin{theorem}
    $\deg P_k \leq k C,\; \forall k \in \mathbb{N},$ where $C = \deg_t M$ -- the degree of the polynomial $M$ with respect to the variable $t$.
\end{theorem}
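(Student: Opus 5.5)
We will argue by strong induction on $k$, using the recursion for the coefficients $P_k$ that comes from substituting $\psi=\sum_{k\ge1}P_k(t)x^k$ into \eqref{eq:specialform} and collecting the coefficient of $x^k$. First we note how $\sigma$ acts: $\sigma(P(t)x^k)=q^kP(t+1)x^k$. Hence the coefficient of $x^k$ in $L(\sigma)\psi$ is $\sum_{j=0}^n a_jq^{j\ell}q^{jk}P_k(t+j)$. The degree in $t$ of each $\sigma^j P_k$ is the same as that of $P_k$. We then write the coefficient of $x^k$ in $xM(x,t,\psi,\sigma\psi,\ldots,\sigma^n\psi)$ as a polynomial $Q_k(t)$. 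Because of the factor $x$ and because $\val\psi\ge1$, $Q_k$ depends only on $P_1,\dots,P_{k-1}$. So the recursion reads
\[
\sum_{j=0}^n a_jq^{j(\ell+k)}P_k(t+j) = -Q_k(t).
\]

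\textbf{Bounding the degree of $Q_k$.} A monomial of $M$ has the form $c\,x^{\alpha}t^{\beta}y_0^{\gamma_0}\cdots y_n^{\gamma_n}$ with $\beta\le C$. Its contribution to the coefficient of $x^k$ in $xM$ is a sum of terms $t^\beta\prod_i (\text{coefficient of }x^{k_i}\text{ in }\sigma^{j_i}\psi)$. Here $1+\alpha+\sum k_i=k$ and each $k_i\ge1$, so $k_i\le k-1$. By the induction hypothesis each factor has degree at most $k_iC$. Thus the degree of such a term is at most
\[
C+C\sum k_i = C(1+\textstyle\sum k_i)\le C(1+(k-1-\alpha))\le kC .
\]
The base case $k=1$ is included: then no $\psi$ factors can occur, so $\deg Q_1\le C$. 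Hence $\deg Q_k\le kC$ in all cases.

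\textbf{Inverting the operator on the left.} This is the main point to check. Take a polynomial $P$ of degree $d$ with leading coefficient $c$. Then $\Lambda_k P(t):=\sum_j a_jq^{j(\ell+k)}P(t+j)$ has $t^d$-coefficient $c\sum_j a_jq^{j(\ell+k)}=c\,L(q^k)$. Indeed, $L(\sigma)$ applied to $t^d x^k$ gives $\sum_j a_j q^{j\ell}q^{jk}(t+j)^d x^k$. By the preceding lemma, $L(q^s)\neq0$ for all $s\in\mathbb{N}\cup\{0\}$. So $\Lambda_k$ preserves degree exactly, which means $\Lambda_k$ acting on polynomials of degree $\le D$ is upper triangular with nonzero diagonal. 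Therefore $\deg P_k=\deg \Lambda_kP_k=\deg Q_k\le kC$, which completes the induction.

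The only delicate bookkeeping is confirming that $Q_k$ involves only $P_i$ with $i<k$. This follows from the explicit factor $x$ in front of $M$ together with $\val\psi\ge1$. Another check is that the $t$-shifts introduced by $\sigma^j$ do not raise degrees; they do not, since shifting $t$ preserves degree.
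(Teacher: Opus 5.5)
Your proof is correct and follows essentially the same route as the paper. Both arguments use strong induction on $k$, the recursion $L(q^k\sigma)P_k=R_k$ with $R_k$ built from $t^\nu$ ($\nu\le C$) and $P_1,\dots,P_{k-1}$, the degree count $\nu+C(k-1)\le Ck$, and $L(q^k)\neq 0$ to conclude $\deg P_k=\deg R_k$; your leading-coefficient computation $c\,L(q^k)$ spells out the step the paper states in one line.
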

We emphasize that the degree of the zero polynomial is assumed to be equal to  $-\infty$.
\begin{proof}
    We prove the statement by mathematical induction on $k$. We substitute the series $\sum_{k=1}^{\infty}P_k(t)x^k$ to the equation \eqref{eq:specialform}
    and equate the coefficients of like powers of $x$. Consider the coefficient of $x^1$. In this case, the monomials from $M$ must not
    depend on $x$, so we obtain the equation
    $$
        L(q\sigma)P_1(t)=-M(0,t,0,\ldots,0).
    $$
    It has a solution beeing a polynomial of the degree
    $
        \deg M(0,t,0,\ldots,0)\leq C.
    $
    Now assume that the assertion is valid for
$k=1,2,\ldots,h-1$.
     We obtain the following equation for $P_h$:
    $$
    L(q^h \sigma) P_h(t) =  R_h(t) ,
    $$
    where $R_h$ is a linear combination of monomials of the form
    $$
    t^\nu  
    (P_{k_1}(t)\ldots P_{k_{s_0}}(t))   
    (P_{l_1}(t+1)\ldots P_{l_{s_1}}(t+1)) 
    \ldots  
    (P_{m_1}(t+n)\ldots 
    P_{m_{s_n}}(t+n)),$$
    and, moreover, $$\sum k_i + \sum l_i + \ldots + \sum m_i \leq h-1.$$
    By the inductive hypothesis
    \[
    \deg P_k \le kC,\; k = 1, \ldots, h-1,
    \]
    so the degree of each monomial is at most  $\nu + C(\sum k_i + \sum l_i + \ldots + \sum m_i) \leq \nu + C(h-1) \leq C h$, consequently, $\deg R_h \leq C h.$ From $L(q^h) \neq 0$ it follows that $\deg P_h = \deg R_h \leq C h.$
\end{proof}

\section{Proof of the main theorem for the equation of the special form}
For any  $\varepsilon > 0$ we can rewrite the solution in the form
\begin{equation}
    \psi = \sum_{k \geq 1}  \tilde{P}_k(\varepsilon \log_{q}x)x^k,
    \label{eq:solution3}
\end{equation}
where $\tilde P_k (t) = P_k\left(\frac{t}{\varepsilon}\right).$

We choose  $\varepsilon$ such that $(1+\varepsilon)^C < |q|$.
In what follows, we will use the notation $\tilde{t} := \varepsilon \log_{q} x $.
For the linear space of polynomials in $\tilde t$ we define norm $\|\cdot\|: \mathbb{C}[\tilde{t}]\to \mathbb{R}_+$ as the sum of the absolute values of the coefficients of the polynomial. We also introduce a family of operators parametrized by $h \in \mathbb{C}: (T_h P)(\tilde t) = P(\tilde t+h)$. It is obvious that $\deg T_h  P = \deg  P$.

\begin{lemma}
    The operators $T_h$ have the following properties:
    \begin{enumerate}
        \item
              $T_{h}^{-1} = T_{-h}$

        \item
              $\|T_h  P\| \leq (1+ |h|)^{\deg P} \|P\|$,

        \item
              $\|T_h  P\| \geq (1+ |h|)^{-\deg P} \|P\|$.
    \end{enumerate}
\end{lemma}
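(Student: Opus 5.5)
Property 1 is immediate: $T_{-h}T_h P(\tilde t) = P(\tilde t + h - h) = P(\tilde t)$, and the same holds in the other order. So $T_h$ is a bijection on $\mathbb{C}[\tilde t]$ with inverse $T_{-h}$. It also preserves degree, as already noted.

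For property 2, I will first treat monomials and then use the triangle inequality. Write $P(\tilde t)=\sum_{i=0}^{d} c_i \tilde t^i$ with $d=\deg P$. By the binomial theorem,
$$T_h \tilde t^i = (\tilde t+h)^i = \sum_{r=0}^{i} \binom{i}{r} h^{i-r}\tilde t^{r}.$$
Hence
$$\|T_h \tilde t^i\| = \sum_{r=0}^i \binom{i}{r}|h|^{i-r} = (1+|h|)^i.$$
The norm (sum of absolute values of coefficients) is a genuine norm, so it obeys the triangle inequality. Therefore
$$\|T_hP\| \le \sum_{i=0}^d |c_i|\,\|T_h\tilde t^i\| = \sum_{i=0}^d |c_i|(1+|h|)^i \le (1+|h|)^{d}\sum_i|c_i| = (1+|h|)^{\deg P}\|P\|.$$
The last step uses $1+|h|\ge 1$, so each factor $(1+|h|)^i$ is at most $(1+|h|)^d$.

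For property 3, I will combine properties 1 and 2 with degree preservation. Set $Q=T_hP$, so that $\deg Q=\deg P$ and $P=T_{-h}Q$ by property 1. Applying property 2 with $-h$ in place of $h$ and using $|-h|=|h|$ gives
$$\|P\| = \|T_{-h}Q\| \le (1+|h|)^{\deg Q}\|Q\| = (1+|h|)^{\deg P}\|T_hP\|.$$
Rearranging yields the claimed lower bound.

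The degenerate case is $P=0$. Then $\deg P=-\infty$, and both sides of each inequality are $0$; the exponent is irrelevant because $\|P\|=0$. So the convention introduced above causes no trouble.

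No step is a real obstacle here. The only point needing care is in property 2: bounding each $(1+|h|)^i$ by $(1+|h|)^{\deg P}$ requires $1+|h|\ge1$, which always holds.
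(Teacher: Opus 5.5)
Your proof is correct and takes essentially the same approach as the paper. Property 2 comes from the binomial expansion plus the triangle inequality: you apply it monomial by monomial using $\|T_h \tilde t^i\| = (1+|h|)^i$, while the paper swaps the double sum and bounds coefficientwise, which amounts to the same computation. Property 3 follows by writing $P = T_{-h}T_hP$ and applying property 2 with $-h$. Your remark on the case $P=0$ is a harmless addition that the paper omits.
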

\begin{proof}
    The first assertion follows from the definition. We will prove the second one.
    Let
        \[
            P(\tilde t)=\sum_{k=0}^N c_k \tilde t^k,\qquad
            \|P\|=\sum_{k=0}^N |c_k|.
        \]
       Then
        \begin{multline}
            P(\tilde t+h)=
            \sum_{k=0}^N c_k (\tilde t+h)^k
            =
            \sum_{k=0}^N c_k\sum_{j=0}^k \binom{k}{j} h^{k-j} \tilde t^j = \sum_{j=0}^N \sum_{k=j}^N  \binom{k}{j} c_k h^{k-j} \tilde t^j .
        \end{multline}

       Consequently,
        \[
            \|T_h P\|
            =\sum_{j=0}^N \left|\sum_{k=j}^N c_k  \binom{k}{j} h^{k-j} \right|
            \le \sum_{j=0}^N\sum_{k=j}^N |c_k|\binom{k}{j}|h|^{k-j}
            =\sum_{k=0}^N |c_k|\sum_{j=0}^k \binom{k}{j}|h|^{k-j}.
        \]
       As
        \[
            \sum_{j=0}^k \binom{k}{j}|h|^{k-j}=(1+|h|)^k,
        \]
        we obtain, that
        \[
            \|T_h P\|
            \le \sum_{k=0}^N |c_k|(1+|h|)^k
            \le (1+|h|)^N \sum_{k=0}^N |c_k|
            =(1+|h|)^{N}\|P\|.
        \]

We prove the third statement by rewriting the identity operator as the product of $T_{-h}$ and $T_{h}$:

        $$\|P\| = \|T_{-h} T_h P\|\leq (1+|h|)^{\deg P} \|T_h P\|.$$
\end{proof}

The action of the operator $\sigma$ on the Dulac series
can be rewritten in a form
$$\sigma \left( \sum_{k \geq 1}  \tilde{P}_k(\tilde t)x^k \right) = \sum_{k \geq 1}  (T_\varepsilon \tilde{P}_k)(\tilde t) q^k x^k.$$

    Consider a family of vector spaces
    $$
    H_j = 
    \left\{
    \psi=\sum_{k \geq 1} P_k(\tilde t) x^k 
    \in 
    \mathbb{C}[\tilde t][[x]]  
    \;\middle|\; 
    \deg P_k \leq C k, \;
    \sum_{k \geq 1}  \|T_\varepsilon^j P_k\|\, |q|^{j k} 
    < 
    \infty
    \right\}.
    $$
    We define norms in the vector spaces $H_j$ as
    $$\|\varphi\|_j =  
    \sum_{k \geq 1} \|T_\varepsilon^j P_k\| \, |q|^{j k}.
    $$

In what follows, we will also omit the argument of the coefficient polynomials $P_k$ whenever the standard argument $\tilde t$ is assumed.
    \begin{theorem}
        The vector spaces $H_j$ are Banach spaces.
    \end{theorem}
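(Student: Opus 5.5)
The plan is to identify $H_j$ isometrically with a weighted $\ell^1$-type space of sequences with values in finite-dimensional normed spaces, and then run the standard completeness argument for such spaces.

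\textbf{Setup.} For each $k$, let $V_k$ be the space of polynomials in $\tilde t$ of degree at most $Ck$. It is finite-dimensional, hence complete in any norm. By Lemma on the properties of $T_h$, applied $j$ times, $\|T_\varepsilon^j P\|$ is a norm on $V_k$: it is a seminorm because $T_\varepsilon^j$ is linear, and it is definite because $T_\varepsilon^j$ is invertible with $\|T_\varepsilon^j P\|\ge (1+\varepsilon)^{-jCk}\|P\|$. Then $\|\cdot\|_j$ is the norm of the space of sequences $(P_k)_{k\ge1}$ with $P_k\in V_k$ and $\sum_k |q|^{jk}\|T_\varepsilon^jP_k\|<\infty$. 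Homogeneity and the triangle inequality follow termwise. Definiteness holds because $\|\psi\|_j=0$ forces every $\|T^j_\varepsilon P_k\|=0$, so every $P_k=0$. The degree constraint $\deg P_k\le Ck$ is preserved under linear combinations, so $H_j$ is a vector space.

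\textbf{Completeness.} Take a Cauchy sequence $\psi^{(s)}=\sum_k P_k^{(s)}x^k$ in $H_j$.

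1. For each fixed $k$, the inequality $|q|^{jk}\|T_\varepsilon^j(P_k^{(s)}-P_k^{(r)})\|\le\|\psi^{(s)}-\psi^{(r)}\|_j$ shows that $(P_k^{(s)})_s$ is Cauchy in the finite-dimensional space $V_k$. It therefore converges to some $P_k\in V_k$, and in particular $\deg P_k\le Ck$.

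2. Fix $\delta>0$ and choose $N$ such that $\|\psi^{(s)}-\psi^{(r)}\|_j<\delta$ for $s,r\ge N$. For any finite $K$,
\[
\sum_{k\le K}|q|^{jk}\|T_\varepsilon^j(P_k^{(s)}-P_k^{(r)})\|<\delta .
\]
Let $r\to\infty$; this uses continuity of $T_\varepsilon^j$ on $V_k$ and of the norm. Then let $K\to\infty$. This gives $\|\psi^{(s)}-\psi\|_j\le\delta$ for $s\ge N$, where $\psi=\sum_k P_kx^k$.

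3. In particular $\psi-\psi^{(N)}$ has finite norm, so $\psi=\psi^{(N)}+(\psi-\psi^{(N)})\in H_j$, and $\psi^{(s)}\to\psi$ in $H_j$.

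\textbf{Main obstacle.} The only point needing care is the limit interchange in step 2. It is handled by passing to the limit in finite partial sums first, which is legitimate because each $V_k$ is finite-dimensional. After that the argument is the standard proof that $\ell^1$ is complete.
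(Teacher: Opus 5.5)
Your proof is correct and follows the same route as the paper. Both arguments get coefficientwise convergence in the finite-dimensional spaces of polynomials of degree at most $Ck$, using the lower bound $\|T_\varepsilon^j P\|\ge(1+\varepsilon)^{-Ckj}\|P\|$, and then pass to the limit in finite partial sums before letting the truncation index go to infinity. The only minor differences are that you explicitly verify that $\|\cdot\|_j$ is a norm, and you get membership of the limit via $\psi=\psi^{(N)}+(\psi-\psi^{(N)})$, whereas the paper uses boundedness of the Cauchy sequence to bound the partial sums of the limit.
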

    \begin{proof}

   Consider a fundamental sequence
    $y_n = \sum_{k \geq 1}P_{nk} x^k \in H_j$, that is
    $\forall \varepsilon_1 > 0\; \exists N \in \mathbb{N}:\;\forall n > N,\; \forall m \in \mathbb{N}$
    \begin{equation}
    \label{fund}
    \sum_{k\geq 1} \|T^j_\varepsilon (P_{nk} - P_{n+m, k})\| |q|^{jk} < \varepsilon_1.
    \end{equation}
    Since all summands on the left-hand side of expression \eqref{fund} are nonnegative, then for any natural $k$
    $$\|T^j_\varepsilon (P_{nk} - P_{n+m, k})\| |q|^{jk} < \varepsilon_1.$$
   Also,
    $$\|T^j_\varepsilon (P_{nk} - P_{n+m, k})\| \geq (1+\varepsilon)^{-Ckj} \|(P_{nk} - P_{n+m, k})\|.$$
    Consequently, for every  $k = k_0 \in \mathbb{N}$ the sequence of polynomials $P_{n k_0}$ is fundamental in a vector space of polynomials of degree not greater than $C k_0$. As the last mentioned vector space is a Banach space as a finite-dimensional one, this sequence has a limit $P_{k_0} := \lim\limits_{n\to \infty} P_{n k_0}$,  where $\deg P_{k_0} \leq C k_0$.
    
    We will prove, that $y_0 = \sum_{k\geq 1} P_k x^k \in H_j$ and $y_n \to y_0$ в $H_j$.
    A fundamental sequence is bounded, i.e.
    $\exists C_1 >0:\; \forall M, n \in \mathbb{N}$  $\sum\limits_{k=1}^M \|T^j_\varepsilon P_{nk}\| |q|^{jk} < C_1.$ 
Passing to the limit as $n \to \infty$,  for each fixed $M$, we obtain that
    $\exists C_1 >0:\; \forall M\in\mathbb{N}\; \sum_{k=1}^M \|T^j_\varepsilon P_{k}\| |q|^{jk} \leq C_1.$
This implies that $\sum\limits_{k \geq 1} \|T^j_\varepsilon P_{k}\| |q|^{jk} \leq C_1 < \infty$, which means that  $y_0 \in H_j$.

Let us now show that $y_n \to y_0$ in $H_j$. From fundamentalness it follows that
    $\forall \varepsilon_1 > 0\; \exists N \in \mathbb{N}:\; \forall M\in\mathbb{N}\;\forall n > N,\; \forall m \in \mathbb{N}$
    $$\sum_{k = 1}^M \|T^j_\varepsilon (P_{nk} - P_{n+m, k})\| |q|^{jk} < \varepsilon_1/2.$$
   Then for $m\to\infty$ we have
    $\forall \varepsilon_1 > 0\; \exists N \in \mathbb{N}:\; \forall M\in\mathbb{N}\;\forall n > N$
    $$\sum_{k = 1}^M \|T^j_\varepsilon (P_{nk} - P_k)\| |q|^{jk} \leq \varepsilon_1/2.$$
    Passing to the limit as $M\to \infty$ and taking into account that $\varepsilon_1 > \varepsilon_1/2$, we obtain $$\|y_n-y_0\|_{j}=\sum_{k \geq 1} \|T^j_\varepsilon (P_{nk} - P_k)\| |q|^{jk} < \varepsilon_1,$$
    and this is exaclty a definition of $y_n \to y_0$ в $H_j$.
\end{proof}
    
    \begin{lemma} \label{pred}
    The spaces $H_j$ have the following properties:
    \begin{enumerate}
        \item  Operator $\sigma$ is an isometry from $H_{j+1}$ to $H_j$, i.e. if $\varphi \in H_{j+1}$, then $\sigma \varphi \in H_{j}$ and $\| \sigma \varphi \|_j = \|\varphi\|_{j+1}$.
    
        \item The embedding $H_{j+1}\subset H_{j}$ holds, and for all $\varphi \in H_{j+1}$ it is true that
 $\|\varphi\|_{j} \leq \|\varphi\|_{j+1} $.
    
        \item If $\varphi\in H_{j_1}$ and $\psi\in H_{j_2}$, then
              $\varphi\psi\in H_0$a and
              \[
                  \|\varphi\psi\|_0 \le \|\varphi\|_0\,\|\psi\|_0 .
              \]
    \end{enumerate}
\end{lemma}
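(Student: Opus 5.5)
The plan is to work coefficientwise, using the explicit form of the norms $\|\cdot\|_j$ together with the properties of the shift operators $T_h$ from the preceding lemma. Throughout, write $\varphi=\sum_{k\ge1}P_k x^k$. By the rewriting of $\sigma$ given above, $\sigma\varphi=\sum_{k\ge1}(T_\varepsilon P_k)\,q^k x^k$.

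\emph{Property 1.} First, $\deg T_\varepsilon P_k=\deg P_k\le Ck$, so the degree condition is preserved. Since the norm of a polynomial is invariant under multiplication by a scalar up to its modulus, we get
\[
\|\sigma\varphi\|_j=\sum_{k\ge1}\|T_\varepsilon^{j}(T_\varepsilon P_k)\|\,|q|^{k}\,|q|^{jk}=\sum_{k\ge1}\|T_\varepsilon^{j+1}P_k\|\,|q|^{(j+1)k}=\|\varphi\|_{j+1},
\]
which is the claimed isometry.

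\emph{Property 2.} It suffices to show termwise that
\[
\|T_\varepsilon^{j}P_k\|\,|q|^{jk}\le \|T_\varepsilon^{j+1}P_k\|\,|q|^{(j+1)k}.
\]
Write $T_\varepsilon^{j}P_k=T_{-\varepsilon}\bigl(T_\varepsilon^{j+1}P_k\bigr)$ and apply property 2 of the lemma on $T_h$ with $h=-\varepsilon$. This gives
\[
\|T_\varepsilon^{j}P_k\|\le(1+\varepsilon)^{\deg P_k}\,\|T_\varepsilon^{j+1}P_k\|\le(1+\varepsilon)^{Ck}\,\|T_\varepsilon^{j+1}P_k\|.
\]
The choice $(1+\varepsilon)^C<|q|$ then yields $(1+\varepsilon)^{Ck}\le|q|^k$, and the termwise inequality follows. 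Summing over $k$ gives both the embedding and the norm inequality. This is the one step where the specific choice of $\varepsilon$ and the degree bound $\deg P_k\le Ck$ built into $H_j$ are really used.

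\emph{Property 3.} By property 2, $H_{j_1},H_{j_2}\subset H_0$, so it suffices to treat $\varphi,\psi\in H_0$ with coefficients $P_k$ and $Q_k$. The norm $\|\cdot\|$ (the $\ell^1$ norm of coefficients) is submultiplicative on $\mathbb{C}[\tilde t]$, since $\|PQ\|\le\|P\|\,\|Q\|$ by expanding the product and using the triangle inequality. The product series is $\varphi\psi=\sum_{k\ge2}\bigl(\sum_{i+l=k}P_iQ_l\bigr)x^k$. Its coefficients satisfy
\[
\deg\sum_{i+l=k}P_iQ_l\le C(i+l)=Ck,
\]
so the degree condition holds. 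Moreover,
\[
\|\varphi\psi\|_0\le\sum_k\sum_{i+l=k}\|P_i\|\,\|Q_l\|=\|\varphi\|_0\,\|\psi\|_0,
\]
a Cauchy product of absolutely convergent series. Hence $\varphi\psi\in H_0$ with the stated bound.

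None of these steps is a real obstacle. The only point requiring care is the inequality in property 2, where the degree bound must be combined with the choice of $\varepsilon$ to absorb the factor $(1+\varepsilon)^{Ck}$ into $|q|^k$.
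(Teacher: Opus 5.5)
Your proof is correct and follows the paper's argument essentially step by step: the coefficientwise isometry computation for $\sigma$, the termwise estimate via $T_{-\varepsilon}$ combined with $\deg P_k\le Ck$ and $(1+\varepsilon)^C<|q|$ for the embedding, and the Cauchy-product bound for the product. You also make explicit two points the paper leaves implicit, namely the reduction to $H_0$ via property 2 and the submultiplicativity $\|PQ\|\le\|P\|\,\|Q\|$; the paper, for its part, records the sharper constant $(1+\varepsilon)^C/|q|$ in property 2 before weakening it to $1$.
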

\begin{proof}
    We will prove each property separately.
    \begin{enumerate}
        \item
              Let $\varphi = \sum_{k\geq1} P_k x^k \in H_{j+1}$, i.e. $\deg P_k \leq Ck$,
              $$\|\varphi\|_{j+1} = \sum_{k \geq 1} \|T_\varepsilon^{j+1} P_k\| |q|^{(j+1) k} < \infty,$$
              then
              $$\sigma \varphi = \sum_{k\geq1} q^k (T_\varepsilon P_k) x^k,$$
              $$ \deg q^k T_\varepsilon P_k \leq Ck,$$
              $$\| \sigma \varphi \|_{j}= \sum_{k \geq 1} \| q^{k} T_\varepsilon^{j} T_\varepsilon  P_k \| |q|^{jk} = \sum_{k \geq 1} \|T_\varepsilon^{j+1} P_k \| |q|^{(j+1) k} = \|\varphi \|_{j+1} < \infty,$$
this means, that $\sigma \varphi \in H_{j}$.

        \item
               Let $\varphi \in H_{j+1}$, then
              \begin{align*}
                  \| \varphi \|_j = \sum_{k\geq 1} \| T_\varepsilon^j P_k \| |q|^{jk} = \sum_{k\geq 1} \| T_{-\varepsilon} T_\varepsilon^{j+1} P_k \| |q|^{(j+1)k} |q|^{-k} \leq \\ \leq
                  \sum_{k \geq 1}
                  \left(
                  \frac{(1+\varepsilon)^C}{|q|}
                  \right)^k \|T_\varepsilon^{j+1} P_k\| |q|^{(j+1)k}
                  \leq
                  \frac{(1+\varepsilon)^C}{|q|} \|\varphi\|_{j+1}  \leq \|\varphi\|_{j+1} < \infty,
              \end{align*}
              this means, that $\varphi \in H_j$.
        \item
              Let $\varphi = \sum_k P_k x^k \in H_{j_1}, \psi = \sum_l Q_l x^l \in H_{j_2},$ it follows that
              $$\varphi \psi = \sum_{k\geq 1} \sum_{l = 1}^{k-1} P_l Q_{k-l} x^k.
              $$
              $$\deg  P_l Q_{k-l} \leq C l + C(k-l) = C k.$$

              $$
                  \|\varphi \psi\|_{0} =
                  \sum_{k\geq 1} \left\| \sum_{l=1}^{k-1} P_l Q_{k-l} \right\|
                  \leq
                  \sum_{k\geq 1} \|P_k\| \sum_{k\geq 1} \|Q_k\| = \|\varphi\|_0 \|\psi\|_0  < \infty.
              $$
    \end{enumerate}

\end{proof}
\begin{theorem}
Operator $L(\sigma) : H_{n} \to H_0$ is continuous and has a continuous inverse.
\end{theorem}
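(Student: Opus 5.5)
Continuity will follow directly from Lemma~\ref{pred}, and for invertibility I plan to write $L(\sigma)=a_nq^{n\ell}\sigma^n(I+K)$, with $\sigma^n:H_n\to H_0$ an isometric isomorphism and $K$ a contraction on $H_n$, and then invert $I+K$ by a Neumann series.

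\textbf{Continuity.} For $\varphi\in H_n$, item~1 of Lemma~\ref{pred} gives $\|\sigma^j\varphi\|_{n-j}=\|\varphi\|_n$. Item~2 gives $\|\sigma^j\varphi\|_0\le\|\sigma^j\varphi\|_{n-j}$. Hence
$$\|L(\sigma)\varphi\|_0\le\sum_{j=0}^n|a_j||q|^{j\ell}\|\varphi\|_n,$$
so $L(\sigma):H_n\to H_0$ is bounded, i.e.\ continuous.

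\textbf{Step 1: $\sigma^n$ is a bijection $H_n\to H_0$.} Injectivity and the isometry property come from item~1. For surjectivity, take $\eta=\sum Q_kx^k\in H_0$ and define $\varphi=\sum q^{-nk}(T_\varepsilon^{-n}Q_k)x^k$, which is legitimate by property~1 of $T_h$. The degrees are preserved, and $\|\varphi\|_n=\sum\|Q_k\|=\|\eta\|_0<\infty$. So $\varphi\in H_n$ and $\sigma^n\varphi=\eta$. Thus $\sigma^{-n}:H_0\to H_n$ exists and is an isometry.

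\textbf{Step 2: the perturbation $K$ is a contraction.} Put
$$K=\sum_{j=0}^{n-1}\frac{a_j}{a_n}q^{(j-n)\ell}\sigma^{j-n}.$$
Then $L(\sigma)=a_nq^{n\ell}\sigma^n(I+K)$ on $H_n$, provided $K$ maps $H_n$ into $H_n$ with $\|K\|<1$. For $\varphi=\sum P_kx^k$ we have $\sigma^{-(n-j)}\varphi=\sum q^{-(n-j)k}(T_{-\varepsilon}^{n-j}P_k)x^k$. Therefore
$$\|\sigma^{j-n}\varphi\|_n=\sum_k\|T_\varepsilon^{j}P_k\||q|^{jk}=\|\varphi\|_j\le\|\varphi\|_n,$$
where the inequality is item~2 of Lemma~\ref{pred} applied repeatedly. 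Condition~2 of the reduction theorem then gives
$$\|K\|\le\sum_{j=0}^{n-1}\Bigl|\frac{a_j}{a_n}q^{(j-n)\ell}\Bigr|<1.$$
Since $H_n$ is Banach, $I+K$ is invertible with bounded inverse $\sum_{s\ge0}(-K)^s$.

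\textbf{Conclusion and main obstacle.} Combining the steps,
$$L(\sigma)^{-1}=(a_nq^{n\ell})^{-1}(I+K)^{-1}\sigma^{-n},$$
which is a bounded operator $H_0\to H_n$. The delicate point is Step~2: one must check that $\sigma^{j-n}$ keeps the constraint $\deg\le Ck$, which holds because $T_h$ preserves degree. One must also check that the norm identity $\|\sigma^{j-n}\varphi\|_n=\|\varphi\|_j$ is right, so that the smallness condition from the reduction theorem is exactly what yields $\|K\|<1$.
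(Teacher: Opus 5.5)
Your proof is correct, and it takes a genuinely different route from the paper's.

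The paper does not factor $L(\sigma)$. It proceeds in three stages:
\begin{enumerate}
\item It first obtains the unique formal Dulac solution of $L(\sigma)y=b$ from $L(q^k)\neq 0$.
\item It rewrites the equation as a first-order system for $(y,\sigma y,\dots,\sigma^{n-1}y)$, applies $\sigma^{-1}$, and solves the resulting fixed-point problem on $H_0^n$ (max norm) by the contraction principle. The contraction constant is $\|\tilde\sigma^{-1}\|_{B(H_0)}\le(1+\varepsilon)^C/|q|$, and condition~2 of the reduction theorem enters this estimate only in the weak form $\sum_{j<n}|a_j/a_n|\,|q|^{(j-n)\ell}\le 1$.
\item It recovers $y\in H_n$ from the last equation of the system, which exhibits $\sigma^n y$ as an element of $H_0$.
\end{enumerate}

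You instead stay in the single space $H_n$. You use the isometric isomorphism $\sigma^n\colon H_n\to H_0$ and the identity $\|\sigma^{j-n}\varphi\|_n=\|\varphi\|_j$. Your smallness comes from condition~2 exactly, as $\|K\|<1$, and the hypothesis $(1+\varepsilon)^C<|q|$ is used only through the monotonicity $\|\varphi\|_j\le\|\varphi\|_n$.

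Your route has two advantages. It is more self-contained: bijectivity falls out of the factorization, so you need neither Lemma~\ref{lemma:1} nor $L(q^k)\neq 0$. It also makes the continuity of the inverse explicit, with
\[
\|L(\sigma)^{-1}\|_{B(H_0,H_n)}\le \frac{1}{|a_n|\,|q|^{n\ell}\bigl(1-\sum_{j=0}^{n-1}|a_j/a_n|\,|q|^{(j-n)\ell}\bigr)}.
\]
The paper leaves boundedness of the inverse implicit; it follows from the fixed-point estimate or from the bounded inverse theorem.

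The paper's version has its own advantage. It only ever inverts the one-step shift $\tilde\sigma^{-1}$ on $H_0$. Its contraction constant $(1+\varepsilon)^C/|q|$ does not depend on how close the sum in condition~2 is to $1$.
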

\begin{proof}
        Operator  $L(\sigma)$ is continuous as the operators $\sigma^j : H_{n} \to H_0, \; j=0, 1, \ldots n$ are  continuous. Operator $\sigma^j$ is continuousfollows from the properties in lemma \ref{pred}.
        Let us show that $L(\sigma)$ is invertible. Consider the operator 
            $$\sigma: \mathbb{C}[\tilde t][[x]] \to \mathbb{C}[\tilde t][[x]],$$ it has an inverse operatod, defined by the formula
            $$ \sigma^{-1} \left(\sum_{k \geq 1} P_k x^k \right) = \sum_{k \geq 1} q^{-k} (T_{-\varepsilon} P_k) x^k. $$
            The restriction of the operator $\sigma^{-1}$ to $H_0$, which we will denote by $\tilde\sigma^{-1}$, has image in $H_0$, and $\|\tilde\sigma^{-1}\|_{B(H_0)} \leq \frac{(1+\varepsilon)^C}{|q|} $. Indeed, if $\varphi = \sum\limits_{k\geq 1}P_k x^k \in H_0,$ then
                \begin{gather*}
                    \|\tilde \sigma^{-1} \sum_{k\geq 1}P_k x^k \|_0 = \sum_{k \geq 1} \|T_{-\varepsilon} P_k\| |q|^{-k} \leq \\ \leq \sum_{k \geq 1} \left(\frac{(1 + \varepsilon)^C}{|q|}\right)^k \|P_k\| \leq \frac{(1+\varepsilon)^C}{|q|} \|\varphi\|_0.
                \end{gather*}
            
        Let us consider an equation
        \begin{equation}
            L(\sigma)y = b, b \in H_{0}. \label{eq:lineq}
        \end{equation}
        First, we show that there exists a unique solution in the form of a formal Dulac series. We substitute the solution in the form $y = \sum\limits_{k\geq 1} P_k(\tilde t)x^k$, $b = \sum\limits_{k\geq 1} B_k(\tilde t) x^k.$ Equating the coefficients of equal powers of the variable $x$, we obtain an inhomogeneous system of linear difference equations with constant coefficients and with right-hand side in the form of a polynomial:    $$L(q^k T_\varepsilon) P_k = B_k.$$
            As $L(q^k) \neq 0$ for $k\in \mathbb{N}$, then, using the theory of linear difference equations, each of the equations has a unique solution in a form of polynomial with degree equal to the degree of the corresponding RHS, i.e. $\deg P_k \leq Ck$.
            
        It remains to show that $y$ lies in $H_n$. To do this, we pass from the equation to a system by the substitution $\sigma^j y = \mathbf{y}_j$.

        \begin{equation}\label{eq:syst1}
            \begin{cases}
                \sigma \mathbf{y}_0 = \mathbf{y}_1,         \\
                \ldots                                      \\
                \sigma \mathbf{y}_{n-2} = \mathbf{y}_{n-1}, \\
                \sigma \mathbf{y}_{n-1} = -\frac{1}{a_n}\sum_{j=0}^{n-1} a_j q^{(j-n) \ell} \mathbf{y}_j + \frac{b}{q^{n \ell } a_n}.
            \end{cases}
        \end{equation}

        We apply the operator $\sigma^{-1}$ to both sides, after which we will consider only those solutions where $\mathbf{y}_j \in H_0, j = 0, 1, \ldots, n-1$; then $\sigma^{-1}$ can be restricted to $H_0$ and rewritten in the form $\tilde \sigma^{-1}$:

        $$\begin{cases}
                \mathbf{y}_0 = \tilde \sigma^{-1}\mathbf{y}_1,         \\
                \ldots                                                 \\
                \mathbf{y}_{n-2} = \tilde \sigma^{-1}\mathbf{y}_{n-1}, \\
                \mathbf{y}_{n-1} = -\frac{1}{a_n}\sum_{j=0}^{n-1} a_j q^{(j-n) \ell} \tilde \sigma^{-1} \mathbf{y}_j + \frac{\tilde \sigma^{-1} b}{q^{n \ell } a_n}.
            \end{cases}$$

        Now we define a Banach space $\mathcal{H}_0 = H_0^n = H_0 \times H_0 \times \ldots \times H_0$, with norm
        $$
\|(\mathbf y_0,\dots,\mathbf y_{n-1})\|_{\mathcal H_0}
:=
\max_{0\le j\le n-1}\|\mathbf y_j\|_0.
$$

        Our problem is reduced to finding a fixed point of the mapping
        \begin{align*}
            (\mathbf{y}_0, \ldots, \mathbf{y}_{n-1}) = \mathbf{y} \mapsto (\tilde\sigma^{-1} \mathbf{y}_1 ,\ldots, \tilde\sigma^{-1} \mathbf{y}_{n-1}, -\frac{1}{a_n}\sum_{j=0}^{n-1} a_j q^{(j-n) \ell} \tilde \sigma^{-1} \mathbf{y}_j) + \\ + (0, \ldots, 0, + \frac{\tilde \sigma^{-1} b}{q^{n \ell } a_n}) =  \tilde{L}\mathbf{y} + \frac{\tilde \sigma^{-1} b}{q^{n \ell }a_n} \mathbf{e}_n.
        \end{align*}
        Let us show that this mapping is contractive, which will be equivalent to the linear operator $\tilde L$ being bounded and having norm less than one.

        $$\|\tilde{L}\|_{B(\mathcal{H}_0)} = $$ $$= \sup_{\|\mathbf{y}\|\leq 1} \max \left(\|\tilde\sigma^{-1} \mathbf{y}_1\|_0 , \ldots, \|\tilde\sigma^{-1} \mathbf{y}_{n-1}\|_0, \left\|-\frac{1}{a_n}\sum_{j=0}^{n-1} a_j q^{(j - n)\ell} \tilde\sigma^{-1} \mathbf{y}_j \right\|_0 \right).$$
        We obtain that $\|\tilde\sigma^{-1} \mathbf{y}_j\|_0 \leq \frac{(1+\varepsilon)^C}{|q|} \|\mathbf{y}_j\|_0 \leq \frac{(1+\varepsilon)^C}{|q|} \|\mathbf{y}\|_{\mathcal{H}_0}$,

        \begin{gather*}
            \left\|-\frac{1}{a_n}\sum_{j=0}^{n-1} a_j q^{(j-n)\ell} \tilde\sigma^{-1} \mathbf{y}_j \right\|_0 \leq \sum_{j=0}^{n-1} |q|^{(j-n)\ell} \frac{|a_j|}{|a_n|} \|\tilde\sigma^{-1} \mathbf{y}_j\|_0 \leq\\ \leq \sum_{j=0}^{n-1} |q|^{(j-n)\ell}\frac{|a_j|}{|a_n|}\frac{(1+\varepsilon)^C}{|q|} \|\mathbf{y}\|_{\mathcal{H}_0} \leq \frac{(1+\varepsilon)^C}{|q|} \|\mathbf{y}\|_{\mathcal{H}_0}.
        \end{gather*}

       From this we obtain that the mapping is contractive, $|\tilde{L}| \leq \frac{(1+\varepsilon)^C}{|q|}< 1$, consequently, by the contraction mapping theorem, there exists a unique fixed point $\mathbf{y}$, which belongs to $\mathcal{H}_0$, hence the unique solution of equation \eqref{eq:lineq} is exactly
 $y = \mathbf{y}_0$. From the last equality of system \eqref{eq:syst1} it follows that   $\sigma^n y = -\frac{1}{a_n}\sum_{j=0}^{n-1} q^{(j-n)\ell} a_j  \mathbf{y}_j + \frac{b}{a_n q^{n\ell}} \in H_0$, so, $y \in H_n$.
    \end{proof}

Now consider the equation \eqref{eq:specialform}. If $y = \sum_{k \geq 1} P_k(\log_q x) x^k$ is a unique aolution in a form of the Dulac series for the equation \eqref{eq:specialform}, then the series $\tilde y = \sum_{k \geq 1} P_k(\log_q x) (\lambda x)^k$ is a unique solution in a form of Dulac series for the equation
$$
L(\sigma) \tilde y + \lambda x M(\lambda x, \log_q x, \tilde y, 
\ldots, \sigma^n \tilde y) = 0.
$$
We can rewrite this, using the notation $\tilde t = \varepsilon \log_q x$ in the following form:
$$
L(\sigma) \tilde y + \lambda x \tilde M(\lambda x, \tilde t, \tilde y, 
\ldots, \sigma^n \tilde y) = 0.
$$

Consider the mapping
$$
A(\lambda, \psi) = L(\sigma)\psi + \lambda x \tilde M (\lambda x, \tilde t, \psi, 
\ldots, \sigma^n \psi).
$$

We apply the following implicit mapping theorem from \cite{KolmFomin} to it:

\begin{theorem}
Let $X, Y, Z$ be Banach spaces, $U$ -- a neibourhood of the point $(x_0, y_0) \in  X \times Y$ and $F$ -- a mapping $U$ to $Z$,
having the following properties:
\begin{enumerate}
    \item $F$ is continuous at the point $(x_0, y_0)$;
    \item $F(x_0, y_0) = 0$;
    \item the partial derivative $F'_y(x,y)$ exists in $U$ and is continuous at the point $(x_0, y_0)$, and the operator $F'_y(x_0, y_0)$ has a bounded inverse.
\end{enumerate}
Then the equation $F(x,y)=0$ is solvable for $y$ in some neighborhood of the point $(x_0, y_0)$.
\end{theorem}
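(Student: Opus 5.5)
We will prove the statement by reducing it to the contraction mapping principle, as is standard for implicit function theorems in Banach spaces. Set $\Gamma := \bigl(F'_y(x_0,y_0)\bigr)^{-1}$; by assumption 3 this is a bounded linear operator $Z\to Y$. For $(x,y)\in U$ we introduce the auxiliary map
$$
G(x,y) := y - \Gamma F(x,y).
$$
Since $\Gamma$ is injective, $F(x,y)=0$ holds if and only if $y$ is a fixed point of $G(x,\cdot)$. So it suffices to find $\delta>0$ and $\eta>0$ with the following property: for every $x$ with $\|x-x_0\|<\eta$, the map $G(x,\cdot)$ sends the closed ball $\bar B(y_0,\delta)\subset Y$ into itself and is a contraction there. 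The contraction mapping theorem then yields a (unique) solution $y=y(x)\in \bar B(y_0,\delta)$.

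\textbf{Step 1 (Lipschitz constant).} The partial derivative is $G'_y(x,y) = I - \Gamma F'_y(x,y) = \Gamma\bigl(F'_y(x_0,y_0)-F'_y(x,y)\bigr)$. By continuity of $F'_y$ at $(x_0,y_0)$ we choose $\eta,\delta$ so small that the product $\{\|x-x_0\|<\eta\}\times \bar B(y_0,\delta)$ lies in $U$ and $\|G'_y(x,y)\|\le 1/2$ on it. We then need the mean value inequality
$$
\|G(x,y_1)-G(x,y_2)\|\le \sup_{y\in[y_1,y_2]}\|G'_y(x,y)\|\,\|y_1-y_2\| \le \tfrac12\|y_1-y_2\|,
$$
which is legitimate because the ball is convex, so the whole segment $[y_1,y_2]$ lies in the region where the derivative exists. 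I expect this to be the main technical point, since only existence of $F'_y$ (not continuity away from $(x_0,y_0)$) is assumed. I will prove the inequality directly. Pick a functional $f\in Y^*$ with $\|f\|=1$ and $f\bigl(G(x,y_1)-G(x,y_2)\bigr)=\|G(x,y_1)-G(x,y_2)\|$, which exists by Hahn--Banach. Apply the real mean value theorem to the scalar function $\tau\mapsto \operatorname{Re} f\bigl(G(x,y_2+\tau(y_1-y_2))\bigr)$ on $[0,1]$; this function is differentiable because $G(x,\cdot)$ is Fréchet differentiable along the segment.

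\textbf{Step 2 (invariance of the ball).} For $y\in \bar B(y_0,\delta)$ we estimate
$$
\|G(x,y)-y_0\| \le \|G(x,y)-G(x,y_0)\| + \|G(x,y_0)-y_0\| \le \tfrac{\delta}{2} + \|\Gamma\|\,\|F(x,y_0)\|.
$$
By assumptions 1 and 2 we have $F(x,y_0)\to F(x_0,y_0)=0$ as $x\to x_0$. Shrinking $\eta$ so that $\|\Gamma\|\,\|F(x,y_0)\|\le \delta/2$ gives $G(x,\bar B(y_0,\delta))\subset \bar B(y_0,\delta)$. The closed ball is a complete metric space, so the contraction principle applies for every such $x$, and we obtain $F(x,y(x))=0$.

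As a by-product, letting $\delta\to0$ and repeating the argument shows that $y(x)\to y_0$ as $x\to x_0$, and that the solution is unique near $(x_0,y_0)$. This is the form in which the theorem will be applied to $A(\lambda,\psi)$ at the point $(0,0)$ with $Y=H_n$ and $Z=H_0$. There the invertibility of $A'_\psi(0,0)=L(\sigma)$ is exactly the preceding theorem on $L(\sigma):H_n\to H_0$.
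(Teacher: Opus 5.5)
Your argument is correct and complete. The paper gives no proof of its own here --- it quotes the theorem from Kolmogorov--Fomin --- and your proof is essentially the standard one from that source: the contraction principle for $y\mapsto y-[F'_y(x_0,y_0)]^{-1}F(x,y)$ on a small closed ball around $y_0$, with the finite-increments inequality obtained from a norming functional (Hahn--Banach) and the scalar mean value theorem, using continuity of $F$ and $F'_y$ only at the point $(x_0,y_0)$.
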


We apply this theorem, taking $X = \mathbb{R},\; Y = H_n,\; Z = H_0$. All these three spaces are Banach.

Let us prove that the conditions are satisfied. Let us show that $\mathrm{Im}\, A \subset H_0$. It was shown earlier that $L(\sigma)$ maps $H_n$ into $H_0$. The expression
$$
\lambda x \tilde M  (\lambda x, \tilde t, \psi, 
\ldots, \sigma^n \psi)
$$
is a polynomial in variables
$$
\lambda x,\; \lambda x\tilde t,\; \ldots,\; \lambda x\tilde t^C,\; \psi,\; \sigma \psi,\; \ldots,\; \sigma^n \psi.
$$
Each of these expressions belongs to $H_0$, Consequently, each monomial — and hence the entire expression — belongs to $H_0$.

We show that the mapping $A$ is differentiable. Consider
$$
A(\lambda, \psi+ \eta)-A(\lambda, \psi)= L(\sigma)\eta + \lambda x \sum_{j=0}^n \frac{\partial \tilde M}{\partial y_j}(
        \lambda x, \tilde t, \psi, \ldots, \sigma^n \psi) \sigma^j \eta + R,
$$

where $R = \sum_{i_1, i_2} C_{i_1, i_2} (\sigma^{i_1}\eta)(\sigma^{i_2} \eta)+ \ldots + \sum_{i_1, \ldots, i_N}C_{i_1, \ldots, i_N} (\sigma^{i_1}\eta)\ldots (\sigma^{i_N}\eta)$, where $C_{i_1, \ldots, i_k}$ are the expressions, which do not depend on $\eta$ and its $q$-difference derivatives.

%---------------------------------
It is clear that as $\eta \to 0$
$$
\|\sigma^j \eta \|_0 = O(\|\eta\|_n),\, j= 0, 1, \ldots, n,
$$
and
$$
\|(\sigma^{i_1} \eta)\ldots(\sigma^{i_k}\eta)\|_0 
= O(\|\eta\|_n^k) = o(\|\eta\|_n), \quad k \geq 2.
$$
Therefore,
$$
\|R\|_0 = o(\|\eta\|_n).
$$

We have
$$
A'_\psi (\lambda, \psi)\eta = L(\sigma)\eta + \lambda x \sum_{j=0}^n \frac{\partial \tilde M}{\partial y_j}(
        \lambda x, \tilde t, \psi, \ldots, \sigma^n \psi) \sigma^j \eta.
$$

Set $(\lambda, \psi)=(0, 0)$, then
$$
A'_\psi (0, 0) = L(\sigma).
$$
By similar reasoning, we conclude, that expression  $A'_\psi(\lambda, \psi) - A'_\psi(0, 0)$ is equal to $R \eta = \sum_{j=0}^n C_j\sigma^j \eta $, where $C_j$ -- is a polynomial in  $\lambda x, \lambda x \tilde t,$ $ \ldots, \lambda x \tilde t^C,$ $ \psi_0, \ldots, \psi_n$, with no constant term.
It follows that 
$$\|R\|_{B(H_n, H_0) } \leq \sum_{j=0}^n \|C_j\|_0 \|\sigma^j\|_{\mathcal{B}(H_n,H_0) }.$$
Each norm $\|C_j\|_0$ aims to zero as  $(\lambda, \psi) \to (0,0)$, thus, 
$$\| A'_\psi(\lambda, \psi) - A'_\psi(0, 0) \|_{B(H_n,H_0) } \to 0,$$ i.e. derivative is convergent at zero.

Since $A'_\psi(0,0)=L(\sigma)$ is a homeomorphism, theorem's assumptions are satisfied.

By the implicit map theorem, there exists $r>0$, that for all $\lambda$ with $|\lambda|<r$, there exists a Dulac series 
$$
\psi_\lambda \in H_n,
$$
for which $A(\lambda,\psi_\lambda)=0$, i.e. $\psi_\lambda$ -- is a solution to the equation, moreover, $\psi_\lambda \in H_n \subset H_0$.

From the uniqueness of the solution in the form of a Dulac series, we obtain that  $\psi_\lambda = \sum_{k=1}^\infty \tilde P_k(\tilde t) \lambda^k x^k$.
We fix $0 <\lambda < r$.
As $\psi_\lambda \in H_0$, we have
$$\sum_{k\geq 1} \lambda^k \|\tilde P_k\| < \infty.$$
Let us now pass on to  the proof of convergence. We choose a sector $S$ of opening less than $2\pi$ and an integer $\rho>C$ such that for all $x\in S$ the following conditions are satisfied:
\begin{enumerate}
    \item $|\varepsilon \log_q x| < |x|^{-1/\rho}$,
    \item $|x|^{1-\frac{C}{\rho}} < |\lambda|.$
\end{enumerate}

Using the first condition, we obtain that for every polynomial $P$ the following estimate is valid:
$$
|P(\varepsilon \log_q(x))|
\leq
\|P\|\, |x|^{-\deg P/\rho}.
$$

Let us estimate the modulus $\psi(x)$ from above:
\begin{gather}
    |\psi(x)|
    =
    \left|
    \sum_{k\ge1} \tilde P_k(\varepsilon \log_q x) x^k
    \right|
    \leq
    \sum_{k\ge1}
    \|\tilde P_k\|\, |x|^{-Ck/\rho}  |x|^k
    \leq
    \\
    \leq
    \sum_{k\geq 1}
    \|\tilde P_k\|\, |x|^{(1-C/\rho)k} 
    < \sum_{k\geq 1} \|\tilde P_k\|\, |\lambda|^k < \infty.
\end{gather}
In the penultimate inequality, we used the second condition on the sector $S$.

\section{Application of the convergence theorem}
Consider the $q$-difference equation
 $$(y + q x - q y - q x y) \sigma y - q x y = 0,\quad q = e.$$
It has a formal solution $\varphi$ in a form of a series
\begin{equation}
\label{ex_series}
    \varphi =  \sum_{k \geq 1} t^{k-1}x^k.
\end{equation}
 
$$F = (y_0 + q x - q y_0 - q x y_0) y_1 - q x y_0.$$
$$F'_{y_0} = (1 - q - qx )y_1 - q x.$$
$$F'_{y_1} = q x + (1 - q - q x) y_0.$$
$$F'_{y_0}|_{\varphi} = - q^2 x + \ldots,\; F'_{y_1}|_{\varphi} = x + \ldots $$
The conditions of the theorem are satisfied, hence the series converges in any sector with center at the origin of sufficiently small radius and opening less than `2$\pi$`.

In this sector the sum of the series is a function $\frac{x}{1-x\ln x}$.

\textbf{Acknowledgments.}  Support from the Basic Research Program of HSE University is gratefully acknowledged (HSE-BR-2025-079).

\vskip 1 cm
\textbf{Affiliations.}\\
HSE University,\\
%Moscow Institute of Electronics and Mathematics,\\
Tallinskaya 34, Moscow, 123458, Russia\\
e-mails: parus-a@mail.ru, gajanovnv@gmail.com

\end{document}